\newcommand{\bA}{{\mathbb A}}
\newcommand{\bF}{{\mathbb F}}
\newcommand{\bK}{{\mathbb K}}
\newcommand{\bZ}{{\mathbb Z}}
\newcommand{\sA}{{\mathcal A}}
\newcommand{\sD}{{\mathcal D}}
\newcommand{\sF}{{\mathcal F}}
\newcommand{\sG}{{\mathcal G}}
\newcommand{\sH}{{\mathcal H}}
\newcommand{\sI}{{\mathcal I}}
\newcommand{\sL}{{\mathcal L}}
\newcommand{\sM}{{\mathcal M}}
\newcommand{\sN}{{\mathcal N}}
\newcommand{\sO}{{\mathcal O}}
\newcommand{\sT}{{\mathcal T}}
\newcommand{\Mo}{{{\mbox{\rm --\,Mod}}}}
\newcommand{\Hom}{{{\mbox{\rm Hom}}}}
\newcommand{\Qc}{{{\mbox{\rm --\,Qcoh}}}}
\newtheorem{theorem}{Theorem}[section]
\newtheorem{lemma}[theorem]{Lemma}
\newtheorem{que}[theorem]{Question}
\newtheorem{cor}[theorem]{Corollary}
\begin{document}
\title{D-affinity and Rational Varieties}
\author{Dmitriy Rumynin}
\address{Department of Mathematics, University of Warwick, Coventry, CV4 7AL, UK
\newline
\hspace*{0.31cm}  Associated member of Laboratory of Algebraic Geometry, National
Research University Higher School of Economics, Russia}
\email{D.Rumynin@warwick.ac.uk}
\thanks{
The work was done entirely during my stay at the Max Planck Institute for Mathematics in Bonn, whose
hospitality and support are gratefully acknowledged.
The research was 
partially supported  by the Russian Academic Excellence Project `5--100'.
The author would like to thank Michel Brion, Masaharu Kaneda, Alexander Kuznetsov and Viatcheslav Kharlamov
for valuable discussions. The author is particularly indebted to Dmitry Kaledin and Adrian Langer for finding gaps in the earlier versions of the paper.}
\date{June 25, 2019}
\subjclass[2010]{16S32, 14M20}

\begin{abstract}
  We investigate geometry of D-affine varieties.
  Our main result is that a D-affine rational projective surface 
  over an algebraically closed field
  is a generalised flag variety of a reductive group.
\end{abstract}

\maketitle

Let us consider a connected smooth projective algebraic variety $X$
over an algebraically closed field $\bK$ of characteristic zero.
By $G/P$ we denote the generalised flag variety of a reductive algebraic group $G$.

{\bf Beilinson-Bernstein Localisation Theorem} (\cite{BB} for $G/B$, \cite[Th. 3.7]{HoPo} for $G/P$){\bf :} 
{\em If $X \cong G/P$, then $X$ is D-affine.}

It is a long-standing problem whether 
the converse statement holds or there are other smooth projective D-affine varieties
(weighted projective spaces are D-affine but singular \cite{MVB}).
The converse statement is known for toric varieties   \cite{Tho}
and homogeneous varieties \cite{ER}.
  Our main result is the converse statement for the rational surfaces:

  {\bf Main Theorem:}\footnote{Since this paper appeared as a preprint, Langer has shown that in characteristic $0$ or $p>7$ any D-affine surface is $G/P$ \cite{Lan1}. It is a serious improvement of our result.} (Corollary~\ref{cor_surf}.) 
  {\em If $X$ is a D-affine rational surface, then $X \cong G/P$.}

In fact, we aim to cover the most general D-affine varieties 
with various intermediate statements. In particular, many results work in the positive characteristic
as well. The reader should be aware that it is not known which of the partial flag varieties are D-affine
in positive characteristic. Some of them are known to be D-affine:
projective spaces \cite{Haa}, $G/B$ in types $A_2$ \cite{Haa} and $B_2$ \cite{AnKa, Sam1}, odd-dimensional quadrics \cite{Lan}.
On the other hand, the grassmannian $Gr(2,5)$  
is not D-affine \cite{KaLa}. 

There are further notions of D-affinity and derived D-affinity in positive characteristic when instead of Grothendieck differential operators,
either small differential operators \cite{HaKa,KaYe,Kane,Sam2} or crystalline differential operators \cite{BMR,BMR2,Lan1} are studied.
These are not covered by the present paper, although some of our methods may prove useful for these unusual differential operators.

Let us explain the context of the paper section-by-section.
In Section~\ref{s1} 
we define D-affine varieties and make general observations about them and quasicoherent sheaves on varieties.

In Section~\ref{s2} we study divisors on D-affine varieties. 
The main results are Theorems~\ref{1st_th} and \ref{2nd_th}. Both are positivity statements about divisors on a D-affine variety.
We use them to study D-affine surfaces in this section, proving Corollary~\ref{cor_surf}, the main result of the paper.

In Section~\ref{sF_new} we formulate two questions that could pave a way for further research for solving the converse Beilinson-Bernstein problem in general.

\section{Preliminaries}
\label{s1}
Pushing for greater generality of some of our results,
we will work over two algebraically closed field:
a field $\bK$ of characteristic zero
and a field $\bF$ of arbitrary characteristic.

Let $X$ be an algebraic variety over $\bF$,
$\sO_X\Qc$ its  category of quasicoherent sheaves.
Following Kashiwara, we consider 
a sheaf $\sA_X$ of $\bF$-algebras on $X$ together with a morphism
of sheaves of algebras $j: \sO_X \rightarrow \sA_X$
such that $\sA_X$ is a quasicoherent $\sO_X$-module under left multiplication.
Notice that the image of $j$ is not necessarily central, so the left and the right
module structures on $\sA_X$ disagree. The main example of $\sA_X$ are
the structure sheaf $\sO_X$ itself and the sheaf of Grothendieck differential operators $\sD_X$.

We consider the category of $\sO$-quasicoherent left $\sA_X$-modules
$\sA_X\Qc$ (i.e., sheaves of left $\sA_X$-modules, quasicoherent as $\sO_X$-modules).
Notice that the cohomology of an $\sO$-quasicoherent left $\sA_X$-module $\sF$
are independent of the category are independent of the category. Indeed,
$$
\Hom_{\sO_X}(\sO_X , \sF)
\cong
\Gamma (X, \sF)\cong 
\Hom_{\sA_X}(\sA_X , \sF). 
$$
Furthermore, an $\sO$-quasicoherent left $\sA_X$-module admits a resolution
$\sF\rightarrow \sI_0 \rightarrow \sI_1\ldots$
by injective $\sA_X$-modules that are flabby as sheaves \cite[Prop 1.4.14]{HTT} (notice that the reference states this for $\sD_X$ but the proof works for $\sA_X$). Hence,
$$
H^k_{\sA_X}(X , \sF)
\cong
H^k (\Gamma (X, \sI_\bullet))
\cong
H^k(X , \sF)
\cong
H^k_{\sO_X}(X , \sF)
$$
for all $k$. In particular, {\em the acyclicity} of $\sF$
for the functor $\Gamma (X,-)$
(i.e.,
vanishing of $H^k(X , \sF)$ for $k>0$) is independent of the category.
On the other hand, the generation by global sections depends on the category.
The space of global sections $\bA=\Gamma (X,\sA_X)$ is an algebra.
We say that $\sF$
is {\em $\sA$-generated by global sections}  if the following natural map
is surjective:
$$
\sA_X\boxtimes_{\bA} \Gamma (X,\sF) \rightarrow \sF \, .
$$
For instance, $\sA_X$ is always $\sA$-generated by global sections but may or may not be $\sO$-generated by global sections. The following lemma is immediate:
\begin{lemma}
  \label{lemma1}
  \begin{enumerate}
  \item If $\sF$ is $\sO$-generated by global sections, then it is $\sA$-generated by global sections.
    \item The following two statements are equivalent:
    \begin{enumerate}
    \item $\sA_X$ is $\sO$-generated by global sections, 
    \item Any $\sO$-quasicoherent left $\sA_X$-module $\sF$, $\sA$-generated by global sections, is $\sO$-generated by global sections
      \end{enumerate}
          \end{enumerate}
  \end{lemma}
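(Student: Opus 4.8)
The plan is to exploit the fact that each of the two evaluation maps factors through the other. Throughout, write $R = \Gamma(X,\sO_X)$, so that ``$\sO$-generated by global sections'' is the exact analogue of the $\sA$-generation condition with the pair $(\sA_X,\bA)$ replaced by $(\sO_X,R)$, i.e.\ surjectivity of the natural map $\sO_X \boxtimes_R \Gamma(X,\sF)\to \sF$. For part (1), I would note that the morphism $j\colon \sO_X\to \sA_X$ and the induced ring map $R\to\bA$ assemble into a commutative triangle
\begin{equation*}
\sO_X \boxtimes_R \Gamma(X,\sF) \longrightarrow \sA_X \boxtimes_\bA \Gamma(X,\sF) \longrightarrow \sF ,
\end{equation*}
whose total composite is the $\sO$-generation map and whose second arrow is the $\sA$-generation map. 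If the composite is surjective, so is the second arrow, which gives (1) at once.

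For the implication (a)$\Rightarrow$(b) of part (2), the idea is to use the hypothesis to trade $\sA_X$ for a sheaf manufactured out of $\sO_X$. Assuming $\sA_X$ is $\sO$-generated, the evaluation map $\sO_X \boxtimes_R \bA \twoheadrightarrow \sA_X$ is surjective; applying the right-exact functor $(-)\boxtimes_\bA \Gamma(X,\sF)$, and using that sheafification is exact, yields a surjection $\sO_X \boxtimes_R \Gamma(X,\sF) \cong (\sO_X \boxtimes_R \bA)\boxtimes_\bA \Gamma(X,\sF) \twoheadrightarrow \sA_X \boxtimes_\bA \Gamma(X,\sF)$. Composing with the $\sA$-generation surjection $\sA_X \boxtimes_\bA \Gamma(X,\sF)\twoheadrightarrow \sF$, which holds because $\sF$ is $\sA$-generated by assumption, produces a surjection $\sO_X \boxtimes_R \Gamma(X,\sF)\twoheadrightarrow \sF$; one checks this is the intrinsic $\sO$-generation map, so $\sF$ is $\sO$-generated.

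The converse (b)$\Rightarrow$(a) is immediate from the remark preceding the lemma that $\sA_X$ is always $\sA$-generated by global sections: applying (b) to $\sF = \sA_X$ states precisely that $\sA_X$ is $\sO$-generated, which is (a).

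The lemma is indeed essentially immediate, so the one point demanding care, and the closest thing to an obstacle, is the bookkeeping with the module structures and the associativity isomorphism $(\sO_X \boxtimes_R \bA)\boxtimes_\bA \Gamma(X,\sF) \cong \sO_X \boxtimes_R \Gamma(X,\sF)$, together with the verification that the resulting surjection really coincides with the canonical $\sO$-generation map rather than some unrelated surjection onto $\sF$. Since $j$ need not be central, I would track the left $\sO_X$-module and right $\bA$-module structures on $\sO_X \boxtimes_R \bA$ honestly; the left $R$-action and right $\bA$-action on $\bA$ commute by associativity regardless of centrality, and the $R$-module structure induced on $\Gamma(X,\sF)$ from the $\sO_X$-structure agrees with the one restricted along $R\to\bA$ because the $\sA_X$-action on $\sF$ restricts via $j$ to its $\sO_X$-action. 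No difficulty beyond this arises.
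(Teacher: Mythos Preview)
Your argument is correct and is precisely the natural unwinding of the factorizations that the paper has in mind when it declares the lemma ``immediate'' without proof. There is nothing to compare: the paper gives no argument, and your proof is the expected one.
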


The variety $X$ is called {\em A-affine} if
\begin{itemize}
\item $\Gamma: \sA_X\Qc \rightarrow \bA\Mo$ is exact,
\item if $\sF \in\sA_X\Qc$ and $\Gamma (\sF)\cong 0$, then $\sF\cong 0$.
\end{itemize}

\begin{lemma} \cite{HTT, Ka}
  \label{d-affine}
  The following statements 
  (where statement (5) requires $X$ to be projective with an ample line bundle
  $\sL$)
about an algebraic variety $X$ over $\bF$ are equivalent:
  \begin{enumerate}
  \item $X$ is A-affine.  
  \item    $\Gamma: \sA_X\Qc \rightarrow \bA\Mo$ is an equivalence.
      \item    $\sA_X \boxtimes_\bA \, - \,  : \bA\Mo \rightarrow \sA_X\Qc$ is an equivalence.
  \item    Each sheaf $\sF\in\sA_X\Qc$ is acyclic for the functor $\Gamma (X,-)$ and $\sA$-generated by global sections.
  \item  There exists $N>0$ such that the following two statements hold
    for all $n>N$:
    \begin{enumerate}
\item     $\sA_X (-n)=\sA_X\otimes_{\sO_X} \sL^{-n\otimes}$ is $\sA$-generated by global sections, 
\item  the 
  map $\Gamma (\sA_X (-n)) \otimes_{\bK} \Gamma (\sL^{n\otimes})\rightarrow \Gamma (\sA_X)$ is surjective.
    \end{enumerate}
  \end{enumerate}
\end{lemma}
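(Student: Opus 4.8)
The plan is to run the standard cycle of implications around the adjunction and to treat the projective criterion $(5)$ separately. The backbone is the tensor--global-sections adjunction
$$\Hom_{\sA_X}(\sA_X\boxtimes_\bA M,\sF)\cong\Hom_\bA(M,\Gamma(X,\sF)),$$
so that $L:=\sA_X\boxtimes_\bA-$ is left adjoint to $\Gamma$, with unit $\eta_M\colon M\to\Gamma(LM)$ and counit $\varepsilon_\sF\colon L\Gamma(\sF)\to\sF$ satisfying the triangle identity $\Gamma(\varepsilon_\sF)\circ\eta_{\Gamma\sF}=\mathrm{id}$. Since a functor is an equivalence exactly when its left adjoint is a quasi-inverse, $(2)\Leftrightarrow(3)$ is purely formal, and $(2)\Rightarrow(1)$ is immediate because an equivalence is exact and sends nonzero objects to nonzero objects.

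For $(1)\Leftrightarrow(4)$ I would first note that $\Gamma$ is always left exact, so its exactness is equivalent, by dimension shifting through the flabby injective resolutions quoted just before the lemma, to the vanishing $H^{>0}(X,\sF)=0$ for every $\sF$, i.e.\ to acyclicity. Granting exactness, I would identify the second A-affineness axiom with $\sA$-generation: set $\sF'=\Ima(\varepsilon_\sF)$ and $\sF''=\sF/\sF'$; the triangle identity forces $\Gamma(\sF')=\Gamma(\sF)$, hence $\Gamma(\sF'')=0$, and the axiom ``$\Gamma(\sF'')\cong0\Rightarrow\sF''\cong0$'' is precisely what makes $\varepsilon_\sF$ surjective, that is, $\sF$ $\sA$-generated. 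Conversely, $\sA$-generation of every sheaf makes the vanishing axiom automatic, giving $(1)\Leftrightarrow(4)$.

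Next $(4)\Rightarrow(2)$. For the unit, resolve $M$ by free modules $\bA^{(J)}\to\bA^{(I)}\to M\to0$, apply the right-exact $L$ and then the exact $\Gamma$, and use that on the Noetherian $X$ cohomology commutes with direct sums, so $\Gamma(\sA_X^{(I)})=\bA^{(I)}$; comparing the two presentations shows $\eta_M$ is an isomorphism. For the counit, $\varepsilon_\sF$ is surjective by generation, and applying the exact $\Gamma$ to $0\to\Ke(\varepsilon_\sF)\to L\Gamma(\sF)\to\sF\to0$, together with the fact that $\Gamma(\varepsilon_\sF)$ is an isomorphism (triangle identity plus the just-proven invertibility of $\eta$), forces $\Gamma(\Ke(\varepsilon_\sF))=0$, whence $\Ke(\varepsilon_\sF)=0$ by the vanishing axiom. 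Thus $\Gamma$ is an equivalence.

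Finally the projective criterion $(5)$. In the direction $(4)\Rightarrow(5)$, (5a) is a special case of generation, while (5b) I would deduce, for $n\gg0$, from exactness of $\Gamma$ applied to the canonical isomorphism $\sA_X(-n)\otimes_{\sO_X}\sL^{n\otimes}\xrightarrow{\sim}\sA_X$ together with the ampleness of $\sL$. The substantial direction is $(5)\Rightarrow(4)$. Generation is a bootstrapping argument: any $\sF\in\sA_X\Qc$ is the direct limit of the $\sA_X$-submodules $\sA_X\cdot\sG$ generated by its coherent $\sO_X$-subsheaves $\sG$; by Serre each such $\sG$ is a quotient of some $(\sL^{-n\otimes})^{\oplus r}$ with $n>N$, so $\sA_X\cdot\sG$ is a quotient of $\sA_X(-n)^{\oplus r}$, which is $\sA$-generated by (5a), and passing to the limit generates $\sF$. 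The hard part will be acyclicity: because $\sA_X$ is not coherent but only a filtered union of coherent $\sO_X$-modules, Serre vanishing does \emph{not} give $H^{>0}(X,\sA_X(-n))=0$ for any fixed $n$, so a general $\sF$ cannot be resolved naively by acyclic twists of $\sA_X$. I expect the way around this to be the coupling of (5a) and (5b): use (5b) to upgrade $\sA$-generation of the twists into $\sO$-generation of $\sA_X$ via Lemma~\ref{lemma1}, and then compute the cohomology of $\sF$ through the now $\sO$-controlled generators while tracking the order filtration on $\sA_X$, following \cite{HTT}. This interplay of (5a) and (5b) is the crux of the argument.
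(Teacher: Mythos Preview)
The paper does not prove this lemma: it is stated with the citation \cite{HTT,Ka} and no argument follows. So there is no ``paper's own proof'' to compare with; your proposal has to be measured against the cited references.

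Your treatment of the cycle $(1)\Leftrightarrow(2)\Leftrightarrow(3)\Leftrightarrow(4)$ via the adjunction $(L,\Gamma)$ and the triangle identities is correct and is exactly the standard argument (cf.\ \cite[\S1.4]{HTT}). The technical inputs you invoke --- flabbiness of injective $\sA_X$-modules and commutation of cohomology with direct sums on a Noetherian scheme --- are legitimate and the first is recorded in the paper just before the lemma. Your $(4)\Rightarrow(5)$ is also fine once spelled out: ampleness gives a surjection $\sO_X\otimes_\bK\Gamma(\sL^{n\otimes})\twoheadrightarrow\sL^{n\otimes}$ for $n\gg0$, tensoring by $\sA_X(-n)$ and applying the exact $\Gamma$ yields (5b).

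The gap is in $(5)\Rightarrow$ acyclicity. Your proposed route through Lemma~\ref{lemma1} does not work as written: Lemma~\ref{lemma1} says $\sO$-generation of $\sA_X$ is \emph{equivalent} to $\sO$-generation of every $\sA$-generated module, so to use it you would already need $\sA_X$ to be $\sO$-generated, and (5a)$+$(5b) do not give that directly. Also, ``tracking the order filtration on $\sA_X$'' is specific to $\sD_X$; the lemma is stated for a general $\sA_X$. The actual mechanism in \cite{Ka} is different: condition (5b) means $1=\sum a_is_i$ with $a_i\in\Gamma(\sA_X(-n))$ and $s_i\in\Gamma(\sL^{n\otimes})$, and since right multiplication by each $s_i$ is a map of \emph{left} $\sA_X$-modules $\sA_X(-n)\to\sA_X$, one obtains a surjection of $\sA_X$-modules $\sA_X(-n)^{\oplus r}\twoheadrightarrow\sA_X$; twisting gives $\sA_X(-n-m)^{\oplus r}\twoheadrightarrow\sA_X(-m)$ for all $m$. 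Feeding these surjections, together with your correct generation step, into a descending induction on $i$ starting from Grothendieck vanishing above $\dim X$ is what forces $H^i(X,\sF)=0$. So your instinct that the coupling of (5a) and (5b) is the crux is right, but its role is to manufacture $\sA_X$-linear surjections with arbitrarily large twist, not $\sO$-generation via Lemma~\ref{lemma1}.
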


In the light of the condition (3) of Lemma~\ref{d-affine} it is useful (as also introduced by Langer \cite{Lan}) to call a variety $X$ {\em A-quasiaffine} if each $\sO$-quasicoherent left $\sA_X$-module is $\sA$-generated by global sections. Let us call a variety $X$ {\em A-O-quasiaffine} if each $\sO$-quasicoherent left $\sA_X$-module is $\sO$-generated by global sections.

While an A-affine variety is A-quasiaffine, a punctured affine space $X=\bK^n\setminus\{0\}$, $n\geq 2$ gives an example of not-D-affine D-quasiaffine variety: the $\sD_X$-module $\sO_X$ has a non-vanishing higher cohomology. It would be nice to have examples illustrating D-O-quasiaffinity (cf. Lemma~\ref{lemma1}).

\begin{que}
Find an example of a variety $X$ with a sheaf of algebras $\sA_X$ such that $X$ is A-affine but not A-O-quasiaffine variety.
\end{que}
\begin{que}
  Is it true that a D-affine variety is necessarily D-O-quasiaffine?
\end{que}

The following lemma is straightforward, so we skip a proof:
\begin{lemma}
  \label{gen_gl_sec}
  Let $\sF,\sF^\prime \in\sA_X\Qc$ be $\sA$-generated by global sections.
\begin{enumerate}
\item If   $\sG\in\sA_X\Qc$ is a quotient of $\sF$, 
  then $\sG$ is $\sA$-generated by global sections.
\item If   $Y\subseteq X$ is a closed subscheme and
  $\sF$ is $\sO$-generated by global sections, then
  $\sF|_Y$ is $\sO_Y$-generated by global sections.
\item   If
  $0 \rightarrow \sF^\prime 
  \rightarrow \sG \rightarrow
  \sF \rightarrow   0$
  is an exact sequence in $\sA_X\Qc$, then $\sG$ is $\sA$-generated by global sections.
 \end{enumerate}
\end{lemma}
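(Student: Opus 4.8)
The plan is to work throughout with the localisation functor $L:=\sA_X\boxtimes_\bA(-)\colon\bA\Mo\to\sA_X\Qc$, the left adjoint of $\Gamma(X,-)$, and its counit $\epsilon_\sF\colon L\Gamma(X,\sF)\to\sF$; by definition $\sF$ is $\sA$-generated by global sections exactly when $\epsilon_\sF$ is surjective. I would settle (1) and (2) first, as both are formal. For (1), let $\pi\colon\sF\twoheadrightarrow\sG$ be the quotient map; naturality of the counit gives $\pi\circ\epsilon_\sF=\epsilon_\sG\circ L\Gamma(\pi)$, whose left-hand side is a composite of surjections, so $\epsilon_\sG$ is surjective and $\sG$ is $\sA$-generated. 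For (2), $\sO$-generation of $\sF$ gives a surjection $\sO_X^{(I)}\twoheadrightarrow\sF$, $e_i\mapsto s_i$, with $s_i\in\Gamma(X,\sF)$; applying the right-exact functor $(-)\otimes_{\sO_X}\sO_Y$ yields a surjection $\sO_Y^{(I)}\twoheadrightarrow\sF|_Y$ whose $i$-th generator is the image $s_i|_Y$ of $s_i$ under $\Gamma(X,\sF)\to\Gamma(Y,\sF|_Y)$, hence a global section, so $\sF|_Y$ is $\sO_Y$-generated by global sections.

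For (3) the natural approach is the same kind of chase. Write $i\colon\sF'\to\sG$ and $p\colon\sG\to\sF$ for the maps of the sequence, so that naturality yields $i\circ\epsilon_{\sF'}=\epsilon_\sG\circ L\Gamma(i)$ and $p\circ\epsilon_\sG=\epsilon_\sF\circ L\Gamma(p)$. Since $\epsilon_{\sF'}$ is surjective, the first identity gives $\Ima\epsilon_\sG\supseteq i(\sF')=\Ke p$, while the second gives $p(\Ima\epsilon_\sG)=\sA_X\cdot\Ima\bigl(\Gamma(X,\sG)\to\Gamma(X,\sF)\bigr)$. As $p$ is surjective these combine to $\Ima\epsilon_\sG=p^{-1}\bigl(\sA_X\cdot\Ima(\Gamma(X,\sG)\to\Gamma(X,\sF))\bigr)$. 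Hence $\epsilon_\sG$ is surjective if and only if the image of $\Gamma(X,\sG)$ in $\Gamma(X,\sF)$ already $\sA$-generates $\sF$; in particular it suffices that $\Gamma(X,\sG)\to\Gamma(X,\sF)$ be surjective.

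The hard part — essentially the whole content of (3) — is precisely this generation, and it is not a routine verification, because $\Gamma(X,-)$ is only left exact. Indeed $\coke\bigl(\Gamma(X,\sG)\to\Gamma(X,\sF)\bigr)$ injects into $H^1(X,\sF')$ and may be non-zero; when it is, the image of $\Gamma(X,\sG)$ can $\sA$-generate a proper submodule of $\sF$ and $\sG$ genuinely fails to be $\sA$-generated. This already happens for $\sA_X=\sO_X$: over $\bF$, on a smooth curve of genus $\ge1$ a non-split extension $0\to\sO_X\to\sG\to\sO_X\to0$ has all of its global sections inside the sub $\sO_X$, so $\sG$ is not globally generated even though its sub and quotient are. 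I would therefore close (3) by producing the surjectivity of $\Gamma(X,\sG)\to\Gamma(X,\sF)$, which follows from the vanishing $H^1(X,\sF')=0$; for $\sA_X=\sD_X$ the surplus of differential operators can force generation across a non-split extension even when that group does not vanish — the bundle above is $\sD$-generated by its single section — which is presumably why the assertion is labelled straightforward, but identifying exactly when $H^1(X,\sF')$ may be disregarded is the genuine obstruction that any complete proof must confront.
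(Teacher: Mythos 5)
Your proofs of (1) and (2) are correct, and since the paper explicitly declares this lemma straightforward and skips the proof, they are precisely the intended arguments. Your analysis of (3), however, does more than identify a difficulty in one proof strategy: it exposes the fact that part (3) is \emph{false} in the stated generality, and your counterexample is valid. Your identity $\Ima\,\epsilon_\sG=p^{-1}\bigl(\sA_X\cdot\Ima(\Gamma(X,\sG)\to\Gamma(X,\sF))\bigr)$ is right, and for a non-split extension $0\to\sO_X\to\sG\to\sO_X\to 0$ on a smooth projective curve of genus at least $1$ (the Atiyah bundle on an elliptic curve, in any characteristic) one has $h^0(\sG)=1$ with the section lying in the sub-copy of $\sO_X$, so $\sG$ is not $\sO$-generated by global sections although its sub and quotient are; since $\sA_X=\sO_X$ is one of the paper's two basic examples, this refutes (3) as written. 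The correct statement needs the additional hypothesis that $\Gamma(X,\sG)\to\Gamma(X,\sF)$ is surjective (for instance $H^1(X,\sF')=0$); on an A-affine variety this is automatic by acyclicity (Lemma~\ref{d-affine}(4)), but there every module is $\sA$-generated anyway, so the repaired (3) has content only in non-affine settings. The flaw is harmless for the paper itself: the proof of Theorem~\ref{1st_th} invokes only the quotient part (1) and the restriction part (2), and part (3) is never used elsewhere.

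One correction to your closing remark: the claim that the Atiyah bundle ``is $\sD$-generated by its single section'' does not rescue (3) for $\sA_X=\sD_X$, because it is not true for the relevant $\sD$-module structure. For the sequence to be exact in $\sD_X\Qc$ the flat connection on $\sG$ must preserve the subsheaf $i(\sO_X)$; such connections exist, at least in characteristic zero, because the extension class can be lifted along the surjection $H^1_{dR}(X)\to H^1(X,\sO_X)$, and for any of them $i(\sO_X)$ is a $\sD$-submodule containing all of $\Gamma(X,\sG)$, so $\Ima\,\epsilon_\sG\subseteq i(\sO_X)\neq\sG$ --- the same example therefore kills (3) for $\sA_X=\sD_X$ as well. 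A connection for which the section and its covariant derivative generate $\sG$ necessarily fails to preserve the sub, and then the sequence is no longer exact in $\sD_X\Qc$, so it is irrelevant to (3). In short, the ``genuine obstruction'' you describe is not an obstruction to one particular proof: part (3) simply requires an extra hypothesis, for $\sO$ and for $\sD$ alike, and your proposal correctly locates what that hypothesis must be.
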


The third lemma is easy but contains not so well-known terminology, hence, we give a proof.
\begin{lemma}
  \label{normal_Barth}
Let $\sF \in\sA_X\Qc$ be normal in the sense of Barth as an $\sO_X$-module.
If for each $p\in X$ there exists an open neighbourhood $p\in U\subseteq X$
such that $X\setminus U$ is of codimension
at least two and $\sF|_U$
is  $\sA|_U$-generated by global sections,
then $\sF$ is $\sA$-generated by global sections.
\end{lemma}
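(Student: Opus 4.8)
The plan is to verify surjectivity of the natural map $\sA_X \boxtimes_\bA \Gamma(X,\sF) \to \sF$ directly on stalks, since a morphism of sheaves is surjective precisely when it is surjective at every point. Fix $p\in X$ and choose an open neighbourhood $p\in U\subseteq X$ as in the hypothesis, so that $X\setminus U$ is closed of codimension at least two and $\sF|_U$ is $\sA|_U$-generated by global sections. Unwinding the definition of the latter on stalks, this says that for every $q\in U$ the stalk $\sF_q$ is generated, as a module over $\sA_{X,q}$, by the germs at $q$ of the sections in $\Gamma(U,\sF)$; here I use that the stalk of $\sA_X \boxtimes_\bA \Gamma(X,\sF)$ at $q$ is $\sA_{X,q}\otimes_\bA \Gamma(X,\sF)$, because tensoring by the fixed $\bA$-module $\Gamma(X,\sF)$ commutes with localisation, and similarly for the $U$-version.

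The essential input is the normality of $\sF$ in the sense of Barth. Applied to the open set $X$ itself together with the closed subset $Z=X\setminus U$ of codimension at least two, it gives that the restriction map $\Gamma(X,\sF)\to\Gamma(U,\sF)$ is bijective. Consequently, for a point $q\in U$ the set of germs $\{\, t_q : t\in\Gamma(X,\sF)\,\}$ coincides with the set of germs $\{\, s_q : s\in\Gamma(U,\sF)\,\}$, since each $s\in\Gamma(U,\sF)$ is the restriction of a unique global section and restriction preserves germs at points of $U$. Combining this with the generation statement above, $\sF_q$ is generated over $\sA_{X,q}$ by germs of global sections of $\sF$ over $X$, which is exactly stalk-wise surjectivity of $\sA_X \boxtimes_\bA \Gamma(X,\sF)\to\sF$ at $q$. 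Taking $q=p$, and then letting $p$ range over all of $X$, yields surjectivity everywhere, so $\sF$ is $\sA$-generated by global sections.

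The step to watch is the discrepancy between the two notions of ``global sections'': $\sA|_U$-generation is formulated using the algebra $\bA_U=\Gamma(U,\sA_X)$ and the $\bA_U$-module $\Gamma(U,\sF)$, whereas the conclusion refers to $\bA=\Gamma(X,\sA_X)$ and $\Gamma(X,\sF)$. I expect this to be the only real subtlety, and it is precisely what Barth normality resolves: once one passes to stalks, surjectivity depends only on which germs of sections are available at each point, and the codimension-two hypothesis guarantees that no germs are lost when one trades $\Gamma(U,\sF)$ for $\Gamma(X,\sF)$. Notably, one never needs any extension property for $\sA_X$ itself, only for $\sF$, so the argument uses the full strength of the normality assumption on $\sF$ and nothing more.
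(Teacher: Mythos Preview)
Your proof is correct and follows essentially the same route as the paper: the paper phrases the argument in terms of the support $Y$ of the cokernel of $\gamma:\sA_X\boxtimes_\bA F\to\sF$ and shows that each admissible $U$ misses $Y$, while you check surjectivity of $\gamma$ stalkwise, which is literally the same condition. Your third paragraph makes explicit the passage from generation over $\bA_U$ to generation over $\bA$ (via the factorisation through $\sA_{X,q}\otimes_{\bA_U}\Gamma(U,\sF)$, whose image in $\sF_q$ depends only on the set of germs), a point the paper leaves implicit; otherwise the arguments are identical.
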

\begin{proof}
  Let $F = \sF_X (X)$.
  Consider an open $U\subseteq X$ with $X\setminus U$ is of codimension at least two. Normality means that the restriction map $F=\sF (X) \rightarrow \sF (U)$ is an isomorphism \cite[p. 126]{Har}.

  Let $Y$ be the support of the cokernel of the natural map
  $\gamma: \sA_X \boxtimes F \rightarrow \sF$. 
Generation of $\sF|_U$ by global sections means that $U\cap Y=\emptyset$.
Our condition means that no point $p$ belongs to $Y$. Hence, $Y=\emptyset$
and $\gamma$ is surjective. 
\end{proof}

The following well-known observation is sufficient for our ends.
We believe that it is true for singular varieties as well: it should follow from the description of $\sD_X$
as the dual $\sO_X|\bF$-algebra
of the algebroid of functions on the formal neighbourhood of the diagonal  $X\rightarrow X\times X$ \cite[2.4]{Rum}. 
\begin{lemma}
  \label{product}
  \begin{enumerate}
    \item
    If $X$ and $Y$ are schemes over $\bF$, $(\sA_X, j_X: \sO_X\rightarrow \sA_X)$ and $(\sA_Y, j_Y: \sO_Y\rightarrow \sA_Y)$
    are sheaves of algebras with  morphisms of sheaves of algebras such that $\sA_X$ and $\sA_Y$
are quasicoherent    $\sO_X$-module and $\sO_Y$-module
under corresponding left multiplications, then $\sA_X\boxtimes\sA_Y$ is a sheaf of $\bF$-algebras on $X\times Y$
and
$$
j_{X\times Y}: \sO_{X\times Y} \xrightarrow{\cong} \sO_X\boxtimes \sO_Y \xrightarrow{j_X\boxtimes j_Y} \sA_X \boxtimes \sA_Y
$$
is a morphism of sheaves of algebras such that $\sA_X\boxtimes\sA_Y$
is a  quasicoherent    $\sO_{X\times Y}$-module.
\item
If $X$ and $Y$ are smooth varieties over $\bF$,
then the natural map $\varphi_{X,Y}:\sD_X \boxtimes \sD_Y \rightarrow \sD_{X\times Y}$ 
is an isomorphism of sheaves of $\bF$-algebras on $X\times Y$.
\end{enumerate}
\end{lemma}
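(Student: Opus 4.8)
The plan is to prove both statements by reducing to explicit computations on affine coordinate patches, where the external tensor product of sheaves becomes an ordinary tensor product of rings over $\bF$.

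For part (1), quasicoherence is automatic, since the external tensor product $\sA_X\boxtimes\sA_Y=p_X^*\sA_X\otimes_{\sO_{X\times Y}}p_Y^*\sA_Y$ of quasicoherent sheaves is quasicoherent. The content is the algebra structure, which I would install locally. On affine opens $\mathrm{Spec}\,R\subseteq X$ and $\mathrm{Spec}\,S\subseteq Y$ the sheaves $\sA_X,\sA_Y$ are given by an $R$-algebra $A$ and an $S$-algebra $B$ (each a module under left multiplication), and $\mathrm{Spec}\,R\times\mathrm{Spec}\,S=\mathrm{Spec}(R\otimes_\bF S)$. A direct computation identifies the restriction of $\sA_X\boxtimes\sA_Y$ with the module associated to
$$
(A\otimes_\bF S)\otimes_{R\otimes_\bF S}(R\otimes_\bF B)\;\cong\;A\otimes_\bF B,
$$
which carries the tensor-product algebra structure $(a\otimes b)(a'\otimes b')=aa'\otimes bb'$. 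As this identification is natural in $R$ and $S$, the local multiplications are compatible with restriction and glue; and $j_{X\times Y}$ is locally $r\otimes s\mapsto j_X(r)\otimes j_Y(s)$, manifestly an algebra homomorphism. The point to watch is that $j_X$ need not be central, so there is no shortcut endowing $p_X^*\sA_X$ with an algebra structure; it is the affine-local tensor product over $\bF$ that makes the construction well defined.

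For part (2), I would first record that $\varphi_{X,Y}$ is the algebra homomorphism determined, through part (1), by the two commuting embeddings $\sD_X\to\sD_{X\times Y}$ and $\sD_Y\to\sD_{X\times Y}$: an operator on $X$ acts on $\sO_{X\times Y}$ through $p_X$ (in the $X$-directions only), defining an operator of the same order on $X\times Y$, and operators coming from the two factors commute. Being an isomorphism of sheaves is local, so I would restrict to affine opens $U\subseteq X$, $V\subseteq Y$ carrying \'etale coordinate systems $x_1,\dots,x_m$ and $y_1,\dots,y_{m'}$, which exist Zariski-locally by smoothness, and use the principal-parts description $\sD^{\le n}_Z=\Hom_{\sO_Z}(\mathcal{P}^n_Z,\sO_Z)$. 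On such a patch $\mathcal{P}^n_X$ is $\sO_X$-free on the monomials $\xi^\alpha=\prod_i(x_i\otimes 1-1\otimes x_i)^{\alpha_i}$, $|\alpha|\le n$, so $\sD_X$ acquires the dual divided-power basis $\{\partial_x^{[\alpha]}\}$, and similarly for $Y$. On $U\times V$ the diagonal ideal is generated by the $\xi_i$ together with the $\eta_j=y_j\otimes 1-1\otimes y_j$, whence $\sD_{X\times Y}$ is $\sO_{X\times Y}$-free on $\{\partial_x^{[\alpha]}\partial_y^{[\beta]}\}$. Since $\sD_X\boxtimes\sD_Y$ is $\sO_{X\times Y}$-free on $\{\partial_x^{[\alpha]}\boxtimes\partial_y^{[\beta]}\}$ and $\varphi_{X,Y}$ carries this basis bijectively onto the former, $\varphi_{X,Y}$ is an isomorphism over $U\times V$, hence globally.

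The main obstacle is positive characteristic. There $\sD_X$ is not generated over $\sO_X$ by first-order operators, so one cannot argue with ``polynomials in the $\partial_{x_i}$''; the divided-power operators $\partial_x^{[\alpha]}$ must instead be extracted from the principal-parts duality, and the essential input is that for smooth $Z$ the sheaf $\mathcal{P}^n_Z$ is locally free on the monomials $\xi^\alpha$ in every characteristic (the diagonal being a regular immersion with conormal sheaf $\Omega^1_Z$). Once this freeness and the identification of the diagonal ideal of $X\times Y$ are in hand, the comparison of bases is formal.
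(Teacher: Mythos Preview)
Your proof is correct and takes essentially the same route as the paper: both parts reduce to affine-local computations, identifying $(\sA_X\boxtimes\sA_Y)(U\times V)\cong A\otimes_\bF B$ for part~(1) and comparing local $\sO$-bases of $\sD_X\boxtimes\sD_Y$ and $\sD_{X\times Y}$ for part~(2). Your use of the principal-parts duality to extract the divided-power basis $\partial_x^{[\alpha]}$ is more explicit about positive characteristic than the paper's bare assertion of the basis $\partial^{|\alpha|}/\partial x^\alpha$, but the underlying argument is identical.
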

\begin{proof} Notice that 
  $$
  \sA_X \boxtimes \sA_Y = p_X^*(\sA_X) \otimes_{\sO_{X\times Y}} p_Y^*(\sA_Y)
  $$
  where $p_X: X\times Y\rightarrow X$, $p_Y: X\times Y\rightarrow Y$
  are the projections. It suffices to compute the sections of
  $\sA_X \boxtimes \sA_Y$ on an open affine subset $U\times V \subseteq X\times Y$.
  Let $R\coloneqq \sO_X(U)$, $A\coloneqq \sA_X(U)$, $S\coloneqq \sO_Y(V)$, $B\coloneqq \sA_Y(V)$. Then
  $$
  p_X^*(\sA_X) (U\times V) = A \otimes_{\bF} S, \ \
  p_Y^*(\sA_Y) (U\times V) = R \otimes_{\bF} B 
  $$
  so that we have a natural isomorphism $f$ of $R\otimes_{\bF}S$-modules
  $$
  \sA_X\boxtimes \sA_Y (U\times V) =
  (A \otimes_{\bF} S) \otimes_{R\otimes_{\bF}S} (R \otimes_{\bF} B)
  \xrightarrow{f} A \otimes_{\bF} B
  $$
  given, together with its inverse, by
  $$
  f\big( (a\otimes s)\otimes (r\otimes b) \big) = j_X(r)a \otimes j_Y(s)b, \
  f^{-1}(a\otimes b)  = (a\otimes 1)\otimes (1\otimes b). 
  $$
  Thus, $\sA_X\boxtimes \sA_Y$ is a sheaf of algebras. The map $j_{X\times Y}$
  is given locally by
  $$
  j_{X\times Y} (r\otimes s)  = j_X(r) \otimes j_Y (s) (\mbox{also equal to } (1\otimes s)\otimes (r\otimes 1) \, ). 
  $$

  The second statement also becomes clear on  an open affine subset $U\times V \subseteq X\times Y$. In this case $\sD_X (U)$ and $\sD_Y(V)$ are subalgebras of $\sD_{X\times Y}(U\times V)$ so that the natural map  
  $$
  \varphi_{X,Y}: \sD_X \boxtimes \sD_Y(U\times V) \cong \sD_X (U)\otimes_{\bF}\sD_Y(V)\rightarrow \sD_{X\times Y}(U\times V)
  $$
  is given by multiplication  $\varphi_{X,Y} (a\otimes b) =ab$.
  It is a homomorphism of algebras because $\sD_X (U)$ and $\sD_Y(V)$ commute inside $\sD_{X\times Y}(U\times V)$.

  If $X$ and $Y$ are smooth, the tangent sheaves $\sT_X$ and $\sT_Y$ are locally free
  with local frames $\partial/\partial x_n$ and $\partial/\partial y_m$.
  Then $\sD_X$ and $\sD_Y$ have local bases 
  $\partial^{|\alpha|}/\partial x^\alpha$ and $\partial^{|\beta|}/\partial y^\beta$,
  given by multi-indices $\alpha$ and $\beta$. An easy calculation in this basis shows that $\varphi_{X,Y}$ is an isomorphism. 
  \end{proof}

\begin{lemma}
  \label{cor_product}
  If $X$ and $Y$ are D-affine D-O-quasiaffine varieties over $\bF$
  such the map $\varphi_{X,Y}$ from Lemma~\ref{product} is an isomorphism,
then any open subset $j:U\hookrightarrow X\times Y$ is a D-O-quasiaffine variety.
\end{lemma}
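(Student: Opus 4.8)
The plan is to prove the statement in two moves: first establish that the product $X\times Y$ is itself D-O-quasiaffine, and then descend this to an arbitrary open subset by a push-forward argument. Throughout I identify $\sD_{X\times Y}$ with $\sD_X\boxtimes\sD_Y=p_X^*\sD_X\otimes_{\sO_{X\times Y}}p_Y^*\sD_Y$ via the hypothesis that $\varphi_{X,Y}$ is an isomorphism; this is precisely what lets me regard an object $\sG\in\sD_{X\times Y}\Qc$ simultaneously as a family of $\sD_Y$-modules over $X$ (a $p_Y^*\sD_Y$-module relative to $X$) and as a family of $\sD_X$-modules over $Y$ (a $p_X^*\sD_X$-module).

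For the product, I fix $\sG\in\sD_{X\times Y}\Qc$ and aim to show that the evaluation map $\sO_{X\times Y}\otimes_\bF\Gamma(X\times Y,\sG)\to\sG$ is surjective, by factoring it through the projection $p_X$. Viewing $\sG$ as a family over $X$ with fibre $Y$ and invoking the \emph{relative} form of D-O-quasiaffinity of the fibre $Y$, the counit $p_X^*p_{X*}\sG\to\sG$ should be surjective, so that $\sG$ is generated over $\sO_{X\times Y}$ by the sections that are global along $Y$. Next, $p_{X*}\sG$ is an $\sO$-quasicoherent $\sD_X$-module (the $\sD_X$-action coming from the $p_X^*\sD_X$-factor via the unit $\sD_X\to p_{X*}p_X^*\sD_X$, and quasicoherence holding because $p_X$ is quasi-compact), so by D-O-quasiaffinity of $X$ it is $\sO_X$-generated by $\Gamma(X,p_{X*}\sG)=\Gamma(X\times Y,\sG)$. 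Applying the exact functor $p_X^*$ to this surjection and using $p_X^*\sO_X=\sO_{X\times Y}$ gives a surjection $\sO_{X\times Y}\otimes_\bF\Gamma(X\times Y,\sG)\twoheadrightarrow p_X^*p_{X*}\sG$. Composing with the counit recovers exactly the evaluation map, which is therefore surjective; hence $X\times Y$ is D-O-quasiaffine.

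With $X\times Y$ handled, the open case is formal. Given $j:U\hookrightarrow X\times Y$ and $\sF\in\sD_U\Qc$, the sheaf $j_*\sF$ carries a $\sD_{X\times Y}$-module structure and is $\sO$-quasicoherent because the open immersion $j$ is quasi-compact and quasi-separated, so $j_*\sF\in\sD_{X\times Y}\Qc$. By the previous step it is $\sO_{X\times Y}$-generated by its global sections; restricting the surjection $\sO_{X\times Y}\otimes_\bF\Gamma(X\times Y,j_*\sF)\to j_*\sF$ back to $U$, where restriction is exact and $\sD_{X\times Y}|_U=\sD_U$, yields a surjection $\sO_U\otimes_\bF\Gamma(U,\sF)\to\sF$ after the identification $\Gamma(X\times Y,j_*\sF)=\Gamma(U,\sF)$. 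Thus every $\sF\in\sD_U\Qc$ is $\sO_U$-generated by global sections, as required.

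I expect the main obstacle to be the first stage of the product step: formulating and proving the relative (parametrised) version of D-O-quasiaffinity, namely that surjectivity of $p_X^*p_{X*}\sG\to\sG$ can be checked over an affine base in the $X$-direction and follows from the corresponding property of the fibre $Y$. This is where I anticipate using the full D-affinity of the factors rather than mere D-O-quasiaffinity, since running Beilinson–Bernstein with parameters requires the flabby injective resolutions and the exactness of $\Gamma$ to persist in families; the isomorphism $\varphi_{X,Y}$ is needed precisely to guarantee that the relevant relative $\sD$-module structures exist on $X\times Y$ in the first place.
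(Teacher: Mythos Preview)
Your proposal is correct, and the open-subset step is identical to the paper's. The product step, however, is argued differently. The paper first shows that $X\times Y$ is D-affine by factoring $\Gamma$ as
\[
\sD_{X\times Y}\Qc \xrightarrow{(p_X)_*} (D(X)\boxtimes\sD_Y)\Qc \xrightarrow{\Gamma} D(X\times Y)\Mo
\]
and using D-affinity of $X$ and $Y$ to see that both functors are equivalences; D-O-quasiaffinity then follows from Lemma~\ref{lemma1}(2), since $\sD_{X\times Y}\cong\sD_X\boxtimes\sD_Y$ is $\sO$-generated by global sections as a box product of $\sO$-generated sheaves. Your route bypasses D-affinity of the product entirely and attacks the evaluation map directly via the counit $p_X^*p_{X*}\sG\to\sG$.

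Your closing worry is unfounded, and in fact your argument is more elementary and slightly stronger than you realise. The relative step needs only D-O-quasiaffinity of $Y$, not D-affinity: over an affine $V=\mathrm{Spec}\,R\subseteq X$ the projection $V\times Y\to Y$ is affine, so $(p_Y)_*(\sG|_{V\times Y})$ is an honest quasicoherent $\sD_Y$-module on $Y$ (with a commuting $R$-action), and D-O-quasiaffinity of $Y$ gives surjectivity of $\sO_Y\otimes_\bF\Gamma(V\times Y,\sG)\to(p_Y)_*(\sG|_{V\times Y})$, which is exactly the surjectivity of the counit over $V\times Y$. No injective resolutions or exactness of $\Gamma$ in families are required. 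Consequently your argument actually proves that a product of D-O-quasiaffine varieties (with $\varphi_{X,Y}$ an isomorphism) is D-O-quasiaffine, whereas the paper's route genuinely consumes the D-affinity hypothesis to pass through Lemma~\ref{lemma1}(2).
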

\begin{proof}
Decompose the global sections as a composition of functors
$$
\Gamma:
\sD_{X\times Y}\Qc
\xrightarrow{(X\times Y\rightarrow X)_\ast}
(D(X)\boxtimes \sD_{Y})\Qc
\xrightarrow{\Gamma}
D({X\times Y})\Mo .
$$
The assumed tensor product decomposition 
together with D-affinity of $X$ and $Y$ imply that both functors are
equivalences. Hence, 
$X\times Y$ is a D-affine variety.
It is also D-O-quasiaffine by a combination of Lemma~\ref{lemma1} and Lemma~\ref{product}

If $\sF\in\sD_U\Qc$, its direct images $j_\ast \sF$ in the categories of D-modules, quasicoherent shaves and topological sheaves coincide. By D-affinity of $X\times Y$, $j_\ast \sF$ is $\sO$-generated by global sections. Hence, $\sF$ is $\sO$-generated by global sections.  
\end{proof}

\section{Divisors}
\label{s2}
We go straight to the main result of this section.
\begin{theorem}
\label{1st_th}
Let $X$ be an irreducible D-O-quasiaffine algebraic variety over $\bF$, $Y\subset X$ an effective Cartier divisor. Then there exists $n$ such that for all $m$, divisible by $n$ the tensor power of the normal sheaf $\sN_{Y}^{\otimes m}$ is $\sO_Y$-generated by global sections.
\end{theorem}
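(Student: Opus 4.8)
The plan is to apply D-O-quasiaffinity to the single natural $\sD_X$-module attached to $Y$, namely the sheaf $\sM = \sO_X(*Y) = \varinjlim_m \sO_X(mY)$ of rational functions with poles along $Y$. This is $\sO$-quasicoherent (a directed colimit of the coherent line bundles $\sO_X(mY)$) and carries the tautological action of Grothendieck differential operators in any characteristic, so by hypothesis it is $\sO$-generated by its global sections, i.e. $\sM = \sO_X\cdot\Gamma(X,\sM)$. First I would record the pole filtration $\sO_X(mY)$ and pass to the associated graded sheaf on $Y$. Since $Y$ is an effective Cartier divisor, $\sO_X(mY)/\sO_X((m-1)Y)\cong\sN_Y^{\otimes m}$, so $\mathrm{gr}\,\sM = \bigoplus_{m\ge 0}\sN_Y^{\otimes m} = \mathrm{Sym}_{\sO_Y}\sN_Y$ is a sheaf of graded $\sO_Y$-algebras on $Y$. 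The symbol $\sigma(s)$ of a global section $s$ of pole order $m$ is an honest element of $\Gamma(Y,\sN_Y^{\otimes m})$, and because $\sM$ is a sheaf of rings these symbols are closed under multiplication; they span a graded subsheaf of algebras $\mathcal{S}\subseteq\mathrm{Sym}_{\sO_Y}\sN_Y$ all of whose homogeneous pieces consist of global sections of the $\sN_Y^{\otimes m}$.

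The heart of the argument is to show that $\mathcal{S}$ fills up $\mathrm{Sym}_{\sO_Y}\sN_Y$ in all sufficiently divisible degrees. I would work locally: at $q\in Y$ write $A = \sO_{X,q}$ (a domain, since $X$ is irreducible) and let $f\in A$ cut out $Y$, so that $\sM_q = A_f$, the symbol of $x\in A_f$ is $\overline{f^{-\ord_f(x)}x}\in\sO_{Y,q}$, and the local pole-order set $D_q = \{m : \text{some global section has pole order } m \text{ with unit symbol at } q\}$ is a submonoid of $(\bZ_{\ge 0},+)$: because $\sN_Y$ is invertible, the product of two sections with unit symbol at $q$ again has a unit symbol in the sum of the degrees. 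Applying the generation relation $\sM_q = A\cdot\Gamma(X,\sM)$ to $f^{-1}$ and comparing $\ord_f$ of the two sides at a generic point $\eta$ of $Y$, where the local ring is one-dimensional, forces some global section to have a genuine pole whose symbol is nonzero in the residue field $k(\eta)$, so $D_\eta\neq\{0\}$. As $D_\eta$ is a monoid, once it contains a positive integer $n_\eta$ it contains every multiple of $n_\eta$, hence $\mathcal{S}_m$ generates $\sN_Y^{\otimes m}$ at $\eta$ for all $m$ divisible by $n_\eta$.

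It remains to globalise. By Nakayama the non-generation locus of $\mathcal{S}_m$ inside the line bundle $\sN_Y^{\otimes m}$ is closed in $Y$, and the previous step shows it avoids the generic points of $Y$ whenever $m$ is divisible by the relevant $n_\eta$. I would then run Noetherian induction on the closed complement together with quasi-compactness of $Y$ to replace the finitely many $n_\eta$ by a single $n$ for which $\mathcal{S}_m=\sN_Y^{\otimes m}$ whenever $n\mid m$. Since $\mathcal{S}_m$ consists of restrictions to $Y$ of global sections of $\sO_X(mY)$, this is exactly the assertion that $\sN_Y^{\otimes m}$ is $\sO_Y$-generated by global sections.

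The main obstacle is the second paragraph: leading-pole cancellation means the relation $f^{-m}=\sum_i a_i s_i$ need not contain any single summand of pole order exactly $m$ with unit symbol, so one cannot read off a generating section of $\sN_Y^{\otimes m}$ for a prescribed $m$. This is precisely why the conclusion is restricted to $m$ divisible by some $n$, and why the monoid-and-Veronese bookkeeping, rather than a naive symbol computation, is forced; the non-reduced and singular behaviour of $Y$ (so that ``nonzero symbol'' must be upgraded to ``unit symbol'' via the residue field $k(q)$) is the delicate point here. A secondary difficulty is passing from generation at the generic points to generation at every point with a uniform $n$, i.e. controlling the closed non-generation loci simultaneously in all large divisible degrees; here their closedness, their compatibility under the algebra structure of $\mathcal{S}$, and Noetherian induction on $Y$ are the tools I would use.
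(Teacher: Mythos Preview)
Your starting point is exactly the paper's: apply the D-O-quasiaffine hypothesis to the single $\sD_X$-module $\sO_X(*Y)=j_*\sO_U=\varinjlim_m\sO_X(mY)$ and use that it is $\sO$-generated by its global sections. From here, however, the paper proceeds much more directly than you do, and your detour through the associated graded on $Y$ develops a genuine gap.

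The paper stays on $X$. Each global section $s_k$ of $\sO_X(*Y)$ lies in some $\Gamma(X,\sO_X(n_kY))$, and one considers the open sets $\{s_k\neq 0\}\subseteq X$ where $s_k$ generates the line bundle $\sO_X(n_kY)$. Since the $s_k$ generate $\sO_X(*Y)$, the paper takes $X=\bigcup_k\{s_k\neq 0\}$, extracts a finite subcover by quasi-compactness, and sets $n$ equal to the lcm of the finitely many $n_k$ involved. For $n\mid m$ the powers $s_k^{m/n_k}$ witness global generation of $\sO_X(mY)$, and then $\sN_Y^{\otimes m}$ is globally generated as a quotient (via $0\to\sO_X\to\sO_X(Y)\to\sN_{Y\mid X}\to 0$ and its tensor powers, followed by restriction to $Y$). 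Your worry about leading-pole cancellation is precisely why the paper does \emph{not} try to hit a prescribed degree: it simply records whatever degrees happen to occur in the finite subcover and passes to their lcm.

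Your route instead passes to the associated graded $\bigoplus_m\sN_Y^{\otimes m}$ on $Y$, proves $D_\eta\neq\{0\}$ only at the generic points $\eta$ of $Y$, and then invokes ``Noetherian induction on the closed complement''. That inductive step is where the argument breaks. Your proof that $D_\eta\neq\{0\}$ uses in an essential way that $\sO_{X,\eta}$ is one-dimensional, so that a nonzero symbol is automatically a \emph{unit} symbol. At a generic point $\zeta$ of the non-generation locus (a proper closed subset of $Y$) the local ring $\sO_{X,\zeta}$ has dimension $\geq 2$, and the equation $f^{-1}=\sum a_i s_i$ in $\sO_{X,\zeta}[1/f]$ no longer forces any $s_i$ to have unit symbol at $\zeta$: one can have $A[1/f]$ generated over a two-dimensional local ring $A$ by elements whose leading coefficients all lie in $\mathfrak m$ (for instance $(y+x)/x^n$ and $(y-x)/x^n$ in $k[[x,y]][1/x]$). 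So there is no inductive mechanism to shrink the closed complement, and the descending chain $Z_n\supseteq Z_{2n}\supseteq\cdots$ of non-generation loci, while eventually stationary, is not shown to be eventually empty. In short: you have correctly isolated the delicate point, but the ``Noetherian induction'' you propose does not resolve it; the paper's formulation in terms of a cover of $X$ by non-vanishing loci, followed by the lcm trick, is what replaces it.
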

\begin{proof} Let $U\coloneqq X\setminus Y$ be the open complement and
  $j: U \hookrightarrow X$ its embedding.
Observe that $j_\ast (\sO_U)=\sO_X(\ast Y)$
  is a $\sD_X$-submodule of the sheaf $\sM_X$ of rational functions and a union of invertible sheaves  $\sO_X(n Y)$.
  On an open subset $V\subseteq X$ (from some cover of $X$) the divisor $Y$ is defined by
  a single function $h$ so that 
  $$
  \sO_X(n Y) (V) =\{ \frac{f}{h^n}\} \, , \ \  
  \sO_X(\ast Y) (V) =\{ \frac{f}{h^k} \}\, , \ \  f\in \sO_X(V), k\in \bZ \, .
  $$
  Since $\sO_X(\ast Y)$ is a $D$-module, it is $\sO$-generated by global sections $s_1, s_2 \ldots$
  Each $s_k$ is a global section of a line bundle $\sO_X(n_k Y)$. Out of the cover $X = \cup_k \{ s_k \neq 0\}$ choose a finite subcover. Let $n$ be the least common multiple of all $n_k$ from the finite subcover. At every point of $X$ one of the sections from the finite subcover does not vanish. It follows that for all $m$ with $n| m$ the invertible sheaf $\sO_X(mY)$ is $\sO$-generated by global sections because one of $s_k^{m/n_k}$ does not vanish at every point. 

  Observe that we have the standard sequence of $\sO_X$-modules
     $$
  0 \rightarrow
  \sO_X \rightarrow
  \sO_X(Y) \xrightarrow{f}
  \sN_{Y\mid X} 
  \rightarrow   0
  $$
  without any further restrictions on $X$ or $Y$,
  where $\sN_{Y\mid X}$ is the normal sheaf of the embedding $Y\hookrightarrow X$,
  considered as a sheaf on $X$. 
  This yields another exact sequence
    $$
  0 \rightarrow
  \ker f^{\otimes m} \rightarrow
  \sO_X(mY) \xrightarrow{f^{\otimes m}}
  \sN_{Y\mid X}^{\otimes m} 
  \rightarrow   0
  $$
  for all $m$.
  By Lemma~\ref{gen_gl_sec}, if $n| m$, then 
  $\sN_{Y\mid X}^{\otimes m}$ is $\sO$-generated by global sections. Hence, so is its restriction
  $\sN_{Y}^{\otimes m} = (\sN_{Y\mid X}^{\otimes m}) |_Y$.
\end{proof}

We can derive some geometric consequences of $D$-affinity as soon
as we can exhibit some interesting $\sD_X$-modules. For example,
$\sO_X$ is a $\sD_X$-module, thus, if $X$ is complete, we know
some of its Hodge numbers
$$
h^{0,0}(X)=1, \
h^{0,m}(X)=0 \ \mbox{ for } \ m>0.  
$$
For a smooth projective surface $X$ 
this means that $p_a=p_g=0$. If $X$ is also D-O-affine, 
Theorem~\ref{1st_th} implies that the surface is minimal
in a strong sense: $Y^2\geq 0$ for any curve $Y\subseteq X$. 
Moreover, 
$$
c_2(X) = 2 + h^{2,2}(X), \
c_1^2 (X) = 10 - h^{2,2}(X), \
1 \leq h^{2,2}(X)\leq 10.
$$
In the light of the next theorem, it would be interesting to classify minimal models with such numerical invariants that do not have any negative curves.
\begin{theorem}
\label{2nd_th}
  A projective D-affine variety $X$ over $\bF$ of dimension at least 2 cannot have any contractible divisors.
\end{theorem}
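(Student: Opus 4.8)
The plan is to test D-affinity against a single explicitly understood $\sD_X$-module attached to $Y$, namely $\sO_X(\ast Y)=j_\ast\sO_U$, where $U\coloneqq X\setminus Y$ and $j\colon U\hookrightarrow X$ is the open embedding. As in the proof of Theorem~\ref{1st_th}, this is a $\sD_X$-submodule of the sheaf $\sM_X$ of rational functions, and it strictly contains $\sO_X$ (locally $1/h$ is a new section, $h$ being a local equation of $Y$). If $X$ were D-affine, then it is D-quasiaffine, so by Lemma~\ref{d-affine}(4) the module $\sO_X(\ast Y)$ would be $\sD$-generated by its global sections; that is, $\sD_X\cdot\Gamma\big(X,\sO_X(\ast Y)\big)=\sO_X(\ast Y)$. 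First I would record the elementary fact that $\sD_X\cdot 1=\sO_X$: every Grothendieck differential operator sends the constant section $1$ to a regular function (its order-zero part), while order-zero operators already recover all of $\sO_X$. Thus the whole argument reduces to showing that $Y$ contributes no new global sections, i.e.\ that $\Gamma\big(X,\sO_X(\ast Y)\big)=\bF\cdot 1$.

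This is where contractibility enters. Let $f\colon X\to Z$ be a contraction of $Y$; after replacing $Z$ by its normalisation I may assume $Z$ is normal and projective, with $f$ proper birational, $f_\ast\sO_X=\sO_Z$, restricting to an isomorphism $U\xrightarrow{\ \sim\ }Z\setminus W$ onto the complement of $W\coloneqq f(Y)$. Since $Y$ is contracted, $\dim W<\dim Y=\dim X-1$, so $W$ has codimension at least two in $Z$; this is the only point at which the hypothesis $\dim X\geq 2$ is used. Then
$$
\Gamma\big(X,\sO_X(\ast Y)\big)=\Gamma(U,\sO_U)=\Gamma(Z\setminus W,\sO_Z)=\Gamma(Z,\sO_Z)=\bF,
$$
where the third equality uses normality of $Z$ together with the fact that $W$ has codimension at least two (Hartogs), and the last uses that $Z$ is connected and projective.

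Putting the two steps together, $\sD$-generation would give $\sO_X(\ast Y)=\sD_X\cdot(\bF\cdot 1)=\sO_X$, contradicting $\sO_X\subsetneq\sO_X(\ast Y)$; hence $X$ cannot be D-affine. I expect the only genuine work to lie in the middle step, namely fixing the definition of a contractible divisor so that a \emph{normal} projective target $Z$ with $f_\ast\sO_X=\sO_Z$ and $W$ of codimension at least two is available, since the entire contradiction rests on $\Gamma\big(X,\sO_X(\ast Y)\big)$ being just the constants. It is worth emphasising that, unlike Theorem~\ref{1st_th}, this argument uses only $\sD$-generation and therefore applies to every D-affine $X$, with no D-O-quasiaffinity assumption.
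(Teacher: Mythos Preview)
Your argument is correct and takes a genuinely different route from the paper's. The paper does not analyse $\sO_X(\ast Y)$ directly; instead it invokes Thomsen's lemma \cite[Lemma~1]{Tho}: on a D-affine variety, if the complement of a divisor is quasiaffine then it is already affine. To apply it, the paper passes to the blow-down $\widetilde{X}$, picks an auxiliary ample divisor $D\subset\widetilde{X}$ missing the finite centre, and sets $U'\coloneqq f^{-1}\big(\widetilde{X}\setminus(D\cup\text{centre})\big)$; this $U'$ is an affine variety with finitely many closed points removed, hence quasiaffine but not affine, while $X\setminus U'$ is still a divisor --- contradiction. Your proof is more self-contained: it exhibits directly a nonzero object of $\sD_X\Qc$, namely $\sO_X(\ast Y)/\sO_X$, with no global sections, the key input being the Hartogs computation $\Gamma(X\setminus Y,\sO)=\bF$ on the normal target. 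What your approach buys is transparency (one sees exactly which $\sD_X$-module witnesses the failure of D-affinity) and the elimination of both the auxiliary divisor $D$ and the external citation; what the paper's approach buys is that Thomsen's lemma is stated without normality hypotheses, so the delicate point you flag about arranging a normal target is absorbed into the black box rather than left exposed.
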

\begin{proof}
  Let $C\subseteq X$ be a contractible divisor. Let $\widetilde{X}$ be the blow-down of $X$ at $C$. The centre of the blow-up $X\rightarrow\widetilde{X}$ is a subscheme $Y\subseteq\widetilde{X}$ supported at a finite number of points. We can pick a divisor $D\subseteq \widetilde{X}$ such that $Y\not\subset D$ and $\widetilde{X}\setminus D$ is affine. Let $U= f^{-1} (\widetilde{X}\setminus (D\cup Y))$. Then $U$ is quasiaffine but not affine, while the complement $X\setminus U$ is a divisor. This contradicts Thomsen's Theorem that states if the complement of a divisor on a D-affine variety is quasiaffine, then it is affine  \cite[Lemma 1]{Tho}.
Notice that while Thomsen also assumes smoothness, it is never used in the proof.
\end{proof}

We are ready to prove the main theorem:
\begin{cor}\label{cor_surf}
  A rational smooth connected projective D-affine surface $X$ over $\bF$
is isomorphic to either $P^2$ or $P^1 \times P^1$. 
\end{cor}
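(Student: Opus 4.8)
The plan is to use Theorem~\ref{2nd_th} to forbid negative curves on $X$, and then to apply the classification of relatively minimal rational surfaces to pin down $X$. First I would observe that on a smooth projective surface any irreducible curve $C$ with $C^2<0$ is contractible: its intersection matrix is the single negative number $C^2$, hence negative definite, so Grauert's contractibility criterion (in characteristic zero) or Artin's criterion (in positive characteristic) produces a birational morphism $f\colon X\to\widetilde X$ onto a normal projective surface collapsing $C$ to a point. Such a $C$ is therefore a contractible divisor, and Theorem~\ref{2nd_th} excludes it. Consequently $X$ contains no irreducible curve of negative self-intersection.

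Feeding this into surface theory, the absence of $(-1)$-curves means, by Castelnuovo's criterion, that $X$ is relatively minimal; since $X$ is rational, the classification of relatively minimal rational surfaces (valid in every characteristic) leaves only $\bP^2$ and the Hirzebruch surfaces $\Sigma_n$ with $n\neq 1$ (the case $n=1$ is a blow-up of $\bP^2$ and is not minimal). For $n\geq 2$ the surface $\Sigma_n$ carries its negative section $C_0$ with $C_0^2=-n<0$, a negative curve forbidden by the first step. Hence $n=0$, and since $\Sigma_0\cong\bP^1\times\bP^1$, the only surviving possibilities are $X\cong\bP^2$ or $X\cong\bP^1\times\bP^1$, both of which are indeed generalised flag varieties.

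The genuine content of the argument is already carried by Theorem~\ref{2nd_th} (and behind it Thomsen's theorem \cite{Tho}); the remaining steps are essentially bookkeeping once it is in hand. The point demanding the most care, and the step I expect to be the main obstacle, is keeping everything characteristic-free, since $\bF$ is arbitrary: one must invoke Artin's rather than Grauert's contractibility criterion, and rely on the characteristic-independence of the list of relatively minimal rational surfaces. I would also verify that Theorem~\ref{2nd_th} genuinely applies when $n\geq 2$, where contracting a $(-n)$-curve produces a \emph{singular} point rather than a smooth blow-down; inspecting its proof, the only features used are that $\widetilde X$ is a normal projective surface, that $C$ maps to finitely many points, and that $f$ is an isomorphism away from them, so singular contractions are covered and no smoothness of $\widetilde X$ is needed.
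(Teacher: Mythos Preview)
Your proposal is correct and follows essentially the same route as the paper: use Theorem~\ref{2nd_th} to forbid contractible (hence negative) curves, invoke the classification of minimal rational surfaces, and rule out $H_n$ for $n\geq 1$ via its negative section. The paper's proof is terser and leaves implicit precisely the points you spell out---the Artin/Grauert contractibility of negative curves and the applicability of Theorem~\ref{2nd_th} when the contraction target is singular---so your version is a more fully justified rendition of the same argument rather than a different one.
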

\begin{proof}
  By Theorem~\ref{2nd_th}, $X$ is minimal.
A minimal smooth rational surface is either $P^2$ or the Hirzebruch surface $H_n$, $n\geq 0$.
Since $H_n$ contains an irreducible curve $C$ with $C^2=-n$, we conclude that $n=0$.
Finally, $H_0\cong P^1 \times P^1$.
\end{proof}

Given an arbitrary closed subvariety $Z\subseteq X$, we can produce some $\sD_X$-modules
supported on $Z$, for instance, functions on the formal neighbourhood of $Z$
or local cohomology sheaves $\sH^n_Z (\sF)$ where $\sF$ is an $\sD_X$-module,
e.g., $\sF =\sO_X$ or $\sF =\sD_X\otimes_{\sO_X}\sL$ for a line bundle $\sL$. It would be interesting to analyse how affinity
of these sheaves affects geometry of $X$.

It is also interesting to resolve D-affinity of some particular varieties:
\begin{que}
  Can a fake projective space be D-affine?
\end{que}
\begin{que}
  Can a cubic hypersurface in $P^n$, $n>3$ be D-affine?
\end{que}

\section{Two further questions}
\label{sF_new}
We would like to state two further questions that could be quite useful for further research, including future attempts to settle the inverse Beilinson-Bernstein problem.

\begin{que}
  \label{queN}
  Let $X$ be an irreducible D-affine algebraic variety over $\bF$, $Y\subset X$ an effective Cartier divisor. 
Is $\sN_{Y}$ necessarily 
$\sO_Y$-generated by global sections?
\end{que}

\begin{que}
  \label{queF}
  Characterize the class of varieties $X$ such that for each point $p\in X$ there exists an open set $U\subseteq X$ such that
  \begin{itemize}
  \item $p\in U$,
  \item the complement $X\setminus U$ has a codimension at least 2,
    \item $\delta :U\rightarrow U^2$ is a scheme-theoretic complete intersection.
  \end{itemize}
  \end{que}

\end{document}